\providecommand{\semi}{\text{semi}}
\providecommand{\twist}{\text{twist}}
\newcommand{\twR}[1]{R_{\theta}\left[#1\right]}
\definecolor{darkgreen}{rgb}{0,0.30,0} 
\definecolor{darkred}{rgb}{0.75,0,0}
\definecolor{darkblue}{rgb}{0,0,0.6} 
\definecolor{lightblue}{RGB}{179, 230, 255}
\def\makeautorefname#1#2{\expandafter\def\csname#1autorefname\endcsname{#2}}
\theoremstyle{definition}
\newtheorem{theorem}{Theorem}[section]
\newtheorem{corollary}[theorem]{Corollary}
\newtheorem{definition}[theorem]{Definition}
\newtheorem{proposition}[theorem]{Proposition}
\newtheorem{remark}[theorem]{Remark}
\newtheorem{relatedlit}[theorem]{A remark on related literature}
\let\c@corollary=\c@theorem
\let\c@proposition=\c@theorem
\let\c@lemma=\c@theorem
\let\c@conjecture=\c@theorem
\let\c@definition=\c@theorem
\let\c@example=\c@theorem
\let\c@remark=\c@theorem
\let\c@notation=\c@theorem
\let\c@equation\c@theorem
\title{A note on twisted group rings and semilinearization}
\author{Thomas Brazelton}
\providecommand{\theauthor}{Thomas Brazelton}
\begin{document}
\begin{abstract}
In this short note, we construct a right adjoint to the functor which associates to a ring $R$ equipped with a group action its \textit{twisted group ring}. This right adjoint admits an interpretation as \textit{semilinearization}, in that it sends an $R$-module to the group of semilinear $R$-module automorphisms of the module. As an immediate corollary, we provide a novel proof of the classical observation that modules over a twisted group ring are modules over the base ring together with a semilinear action.
\end{abstract}

\maketitle

\section{Introduction}

Twisted group rings, or \textit{skew group rings}, are classical algebraic objects which provide a way to incorporate a group action on a ring into the multiplication on a group ring. These rings are classical enough to evade any attempt to pin down their origin, but they appear as early as the 1970's, where Handelman, Lawrence, and Schelter began to establish their theory \cite{HLS}. The classical question of which properties of rings and groups extend to properties on the group ring has been a fruitful direction of research (see for example \cite[Appendix~2]{Lambek}), and the same can be said for twisted group rings --- early examples include the theses of Chen \cite{Chen-thesis} and Park \cite{Park-thesis}, as well as \cite{FishMont,Ost}.

Twisted group rings are ubiquitous in modern algebra, appearing in fields as diverse as the representation theory of Lie algebras \cite{Mcconnell} to Heisenberg categorification \cite{RossoSavage}. In equivariant homotopy theory, the algebraic $K$-theory of twisted group rings arises when taking fixed points of the equivariant algebraic $K$-theory spectrum \cite{Mona}. Upcoming work of the author computes the homotopy groups of this spectrum as Mackey functors of $K$-groups of twisted group rings \cite{Brazelton}, which served as the impetus for the work here.

This note provides a concise introduction to twisted group rings. In particular, we see that the association of a twisted group ring to a group action is functorial, and admits a right adjoint. Our main result is as follows.

\begin{theorem} (As \autoref{thm:twistification-semilinearization-adjunction}) For any ring $R$, there is an adjunction of slice categories
\begin{align*}
    \Grp/\Aut(R) \rightleftarrows R/\Ring,
\end{align*}
where the left adjoint sends a group homomorphism $G \to \Aut(R)$ to its twisted group ring.
\end{theorem}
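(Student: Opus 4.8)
The plan is to establish the adjunction by producing a bijection of hom-sets
$$\Hom_{R/\Ring}\bigl(L\phi,\ \iota\colon R \to S\bigr) \;\cong\; \Hom_{\Grp/\Aut(R)}\bigl(\phi,\ \mathcal{S}(\iota)\bigr),$$
natural in both the object $\phi\colon G \to \Aut(R)$ of $\Grp/\Aut(R)$ and the object $\iota\colon R \to S$ of $R/\Ring$, where $L$ denotes the twistification functor and $\mathcal{S}$ the semilinearization functor. Here I write $L\phi = (R \to R_\phi[G])$ for the twisted group ring, and $\mathcal{S}(\iota) = (\pi_S\colon \Gamma_S \to \Aut(R))$, where $\Gamma_S$ is the group of semilinear automorphisms — concretely, the pairs $(\sigma, u)$ with $\sigma \in \Aut(R)$ and $u \in S^\times$ satisfying $u\,\iota(r)\,u^{-1} = \iota(\sigma(r))$ for all $r \in R$, with $\pi_S(\sigma,u) = \sigma$.

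First I would unwind the left-hand side. Since $R_\phi[G]$ is generated as a ring by the image of $R$ together with the group elements $g \in G$, a morphism $\Psi$ in $R/\Ring$ — that is, a ring homomorphism $\Psi\colon R_\phi[G] \to S$ restricting to $\iota$ on $R$ — is completely determined by the assignment $g \mapsto \Psi(g)$. Each $\Psi(g)$ is a unit of $S$ (being the image of the unit $g$), the assignment is multiplicative in $g$, and applying $\Psi$ to the defining twisting relation $g\,r = \phi(g)(r)\,g$ yields the conjugation identity $\Psi(g)\,\iota(r)\,\Psi(g)^{-1} = \iota(\phi(g)(r))$. Thus the left-hand side is precisely the set of group homomorphisms $\rho\colon G \to S^\times$ satisfying $\rho(g)\,\iota(r)\,\rho(g)^{-1} = \iota(\phi(g)(r))$ for all $g \in G$ and $r \in R$. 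On the right-hand side, a morphism $\phi \to \mathcal{S}(\iota)$ in $\Grp/\Aut(R)$ is a group homomorphism $F\colon G \to \Gamma_S$ with $\pi_S \circ F = \phi$; writing $F(g) = (\phi(g), \rho(g))$, the condition $F(g) \in \Gamma_S$ is exactly the same conjugation identity, and $F$ being a homomorphism is exactly multiplicativity of $\rho$. Hence both hom-sets are canonically the same set of data, and the identification is the desired bijection.

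It remains to verify naturality and the reverse well-definedness, and I expect the latter to be the only real content. Given a homomorphism $F$ as above with $F(g) = (\phi(g), \rho(g))$, one must check that the additive, $R$-linear extension $\Psi\bigl(\sum_g r_g\, g\bigr) = \sum_g \iota(r_g)\,\rho(g)$ is genuinely multiplicative on $R_\phi[G]$; comparing $\Psi\bigl((r\,g)(s\,h)\bigr) = \iota(r)\,\iota(\phi(g)(s))\,\rho(g)\,\rho(h)$ with $\Psi(r\,g)\,\Psi(s\,h) = \iota(r)\,\rho(g)\,\iota(s)\,\rho(h)$ shows that multiplicativity holds if and only if the conjugation identity defining $\Gamma_S$ holds, so nothing is lost. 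The two constructions are then visibly inverse to one another, and naturality in $\phi$ and in $\iota$ is a routine diagram chase, since every map in sight is determined by its effect on the generators $g \in G$ or on the units $\rho(g) \in S^\times$. The semilinear interpretation of $\Gamma_S$ — realizing $(\sigma,u)$ as the $\sigma$-semilinear automorphism $s \mapsto u s$ of $S$ viewed as a left $R$-module — then lets one read the unit of the adjunction, $g \mapsto (\phi(g), g)$, as the tautological semilinear action of $G$ on $R_\phi[G]$, which is the bridge to the classical module-theoretic corollary.
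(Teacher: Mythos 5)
Your proposal is correct and follows essentially the same route as the paper's proof: both identify a morphism $R_\theta[G] \to S$ under $R$ with its restriction to the group elements $1_R\, g$, check that these values land in the group of semilinear units via the twisting relation, construct the inverse by additive $R$-linear extension (with multiplicativity equivalent to the conjugation/semilinearity identity), and reduce naturality to agreement on the generators $1_R\, g$. The only difference is presentational — you package injectivity and surjectivity as an identification of both hom-sets with the same set of data, while the paper checks them separately — but the underlying bijection and computations are identical.
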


We refer to the right adjoint as \textit{semilinearization}; if $R \to \End_\Ab(M)$ determines an $R$-module, the right adjoint sends it to the group of semilinear $R$-module automorphisms of $M$. As an immediate corollary of the above theorem, we recover the classical observation that modules over a twisted group ring are modules over the base ring equipped with a semilinear action (\autoref{cor:mod-over-tw-grp-rg}).

\begin{relatedlit} In the work of Fechete and Fechete, the authors present a right adjoint to the functor $(\id_\Grp /\Aut) \to\Ring$ from the comma category of groups over automorphisms of rings, associating to a group action $\sigma: G\to \Aut(R)$ the group ring $R_\sigma[G]$ \cite{fechete}. Our approach is morally different, as we view the construction of a twisted group ring over $R$ as naturally valued in $R$-algebras. In particular our right adjoint is quite different than the one constructed by Fechete and Fechete, and we interpret it in the context of semilinear module automorphisms. As another point, we would like to draw the attention of the reader to excellent exposition found in the unpublished thesis of Edward Poon on categorical aspects of twisted group rings \cite{poon}.
\end{relatedlit}

\section{Semilinear $G$-actions}

\begin{definition} Let $R$ be a ring, and $\phi \in \Aut_{\Ring}(R)$ a ring automorphism of $R$. We define a $\phi$-\textit{semilinear} $R$-module homomorphism $f: M \to N$ to be a function satisfying
\begin{enumerate}
    \item $f(m+m') = f(m) + f(m')$ for all $m,m' \in M$,
    \item $f(rm) = \phi(r)\cdot f(m)$ for all $r\in R$ and $m\in M$.
\end{enumerate}
\end{definition}
In particular, semilinear isomorphisms are precisely those maps which are underlain by bijections.

The collection of $R$-modules with $\phi$-semilinear maps \textit{does not} form a category unless $\phi$ is trivial (we may see that there is no good notion of identity morphism). We may, however, create a category by considering \textit{all} semilinear maps, that is, all morphisms which are $\phi$-semilinear for some choice of $\phi$. We see that identity morphisms are $\id_R$-semilinear, and that morphisms compose; given $f: M \to N$ which is $\phi$-semilinear and $g: N \to P$ which is $\omega$-semilinear, then $gf$ is $(\omega\circ \phi)$-semilinear. We let $\Mod^\semi(R)$ denote this category. Note that $\Mod(R) \subseteq \Mod^\semi(R)$ is the subcategory of $\id_R$-semilinear morphisms.

\begin{remark} There is always a forgetful functor
\begin{align*}
    \Mod^\semi(R) \to \Aut_\Ring(R),
\end{align*}
given by sending a $\phi$-semilinear morphism $f$ to $\phi$. In particular, this induces a group homomorphism $U: \Aut_{\Mod^\semi(R)}(M) \to \Aut_\Ring(R)$ for any $M\in \Mod(R)$.
\end{remark}

\begin{definition} Let $R$ be a ring. A \textit{semilinear} $G$\textit{-action} on an $R$-module $M$ is a group homomorphism $G \to \Aut_{\Mod^\semi(R)}(M)$.
\end{definition}
Such a semilinear action determines a unique $G$-ring structure on $R$ by post-composition with the natural homomorphism $U:\Aut_{\Mod^\semi(R)}(M) \to \Aut_\Ring(R)$. In practice, we will care about the case where $R$ is already equipped with a $G$-ring structure via some group homomorphism $\theta: G \to \Aut_\Ring(R)$. In this setting, the definition of a semilinear $G$-action is given by an appropriate choice of lift of $\theta$ along $U$.

\begin{definition} Let $R$ be a $G$-ring. A \textit{semilinear} $G$\textit{-action} on an $R$-module $M$ is a group homomorphism $f: G \to \Aut_{\Mod^\semi(R)}(M)$ making the diagram commute
\[ \begin{tikzcd}[row sep=large]
     & G\ar[dl,dashed,"f" above left]\dar["\theta" right]\\
     \Aut_{\Mod^\semi(R)}(M)\rar["U" below] & \Aut_\Ring(R),
\end{tikzcd} \]
where $\theta$ is the $G$-action on $R$. Explicitly, this is the data of a $\theta_g$-semilinear module homomorphism $f(g)$ for every $g\in G$ so that $f(e)$ is the identity and $f(gh) = f(g)f(h)$ is $\theta_{gh}$-semilinear.
\end{definition}

\section{Twisted group rings and semilinearization}\label{sec:semilinearization}

Given an $R$-module $M$ with multiplication $\chi: R \to \End_\Ab(M)$, we can functorially determine its group of semilinear $R$-module automorphisms. We define $\semi_R(\End_\Ab(M))$ to be the following group:
\begin{align*}
        \semi_R(\End_\Ab(M)) &:= \left\{ (\sigma,\phi) \in \End_\Ab(M)^\times \times \Aut_{\Ring}(R) \mid \sigma \circ\chi(r) = \chi(\phi(r))\cdot \sigma\ \forall r\in R \right\} \\
        &= \bigcup_{\phi\in \Aut_{\Ring}(R)} \left\{ \phi\text{-semilinear } R\text{-module automorphisms of } M\right\} \\
        &= \Aut_{\Mod^\semi(R)}(M).
\end{align*}

Briefly forgetting that $\End_\Ab(M)$ is an endomorphism ring of an abelian group, we can replicate the construction above for an arbitrary ring $S$, provided that $S$ comes with of a ring homomorphism from $R$. This motivates the following definition.

\begin{definition}\label{def:group-semilinear-automorphisms} Let $R$ be a ring, and suppose that $f: R \to S$ is an object of the slice category $R/\Ring$. Then we define the \textit{group of semilinear inner automorphisms under} $R$ as
\begin{align*}
    \semi_R(S) := \left\{ (s,\phi) \in S^\times \times \Aut_\Ring(R) \mid sf(r) = f(\phi(r))s \text{ for each } r\in R \right\}.
\end{align*}
One may easily verify that this is a group, where multiplication occurs diagonally as $(s,\phi)\cdot (s',\psi) := (ss',\phi\psi)$. We remark that $\semi_R(S)$ has a forgetful group homomorphism to $\Aut_\Ring(R)$.
\end{definition}

\begin{proposition}\label{prop:semilinearization-is-functor} For a ring $R$, there is a functor of slice categories
\begin{align*}
    \semi_R: R/\Ring \to \Grp/\Aut_\Ring(R),
\end{align*}
which we call \textit{semilinearization}.
\end{proposition}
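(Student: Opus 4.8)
The plan is to promote the object-level construction of \autoref{def:group-semilinear-automorphisms} to a functor by prescribing its action on morphisms and then verifying the functor axioms. On objects, $\semi_R$ sends an object $f\colon R \to S$ of $R/\Ring$ to the forgetful group homomorphism $\semi_R(S) \to \Aut_\Ring(R)$, $(s,\phi)\mapsto \phi$, which is exactly an object of $\Grp/\Aut_\Ring(R)$. A morphism in $R/\Ring$ from $f\colon R\to S$ to $f'\colon R\to S'$ is a ring homomorphism $h\colon S \to S'$ with $h\circ f = f'$, and I would send it to
\begin{align*}
    \semi_R(h)\colon \semi_R(S) \to \semi_R(S'), \qquad (s,\phi) \mapsto (h(s),\phi).
\end{align*}

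First I would check that this assignment actually lands in $\semi_R(S')$, which is the crux of the argument. That $h(s)$ is a unit is automatic, since $h(s)h(s^{-1}) = h(1) = 1$ and similarly on the other side, so $h(s)$ is invertible whenever $s$ is. The content is the semilinearity relation, and this is precisely where the hypothesis $h\circ f = f'$ enters: applying the ring homomorphism $h$ to the defining identity $sf(r) = f(\phi(r))s$ produces $h(s)\,h(f(r)) = h(f(\phi(r)))\,h(s)$, and substituting $h\circ f = f'$ rewrites this as $h(s)f'(r) = f'(\phi(r))h(s)$ for every $r\in R$. Hence $(h(s),\phi)\in\semi_R(S')$. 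I expect this compatibility check to be the only genuine obstacle; everything else is bookkeeping.

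It then remains to confirm that $\semi_R(h)$ is a morphism in the target slice category and that the assignment respects identities and composition. The map $\semi_R(h)$ is a group homomorphism because $h(ss')=h(s)h(s')$ and multiplication in the $\semi$-groups is diagonal, and it lies over $\Aut_\Ring(R)$ because it fixes the second coordinate $\phi$; thus it is a morphism in $\Grp/\Aut_\Ring(R)$. Functoriality is immediate from the formula: $\semi_R(\id_S)$ acts as the identity on $\semi_R(S)$, and $\semi_R(h'\circ h)(s,\phi) = (h'(h(s)),\phi) = \semi_R(h')\big(\semi_R(h)(s,\phi)\big)$ gives compatibility with composition. Assembling these observations yields the desired functor.
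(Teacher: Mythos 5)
Your proposal is correct and follows essentially the same route as the paper: the same coordinate-wise formula $(s,\phi)\mapsto(h(s),\phi)$ on morphisms, the same verification that applying $h$ to the relation $sf(r)=f(\phi(r))s$ and using $h\circ f = f'$ lands the image in $\semi_R(S')$, and the same routine checks of the slice and functor axioms. Your explicit observation that $h(s)$ remains a unit is a small point the paper leaves implicit, but it does not change the argument.
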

\begin{proof} Suppose we have a morphism in $R/\Ring$ of the form
\[ \begin{tikzcd}
     & R\ar[dl,"f" above left]\ar[dr,"g" above right] & \\
    S\ar[rr,"h" below] &  & T.
\end{tikzcd} \]
Then we define
\begin{align*}
    \semi_R(h): \semi_R(S) &\to \semi_R(T) \\
    (s,\phi) &\mapsto (h(s),\phi).
\end{align*}
To verify that $(h(s),\phi) \in \semi_R(T)$, we see that
\begin{align*}
    h(s) g(r) &= h(s) h(f(r)) = h(sf(r)) = h(f\phi(r)s) = g(\phi(r))h(s),
\end{align*}
for any $r\in R$. It is clear that $\semi_R(h)$ is a group homomorphism, and moreover that it commutes with the forgetful maps from $\semi_R(S)$ and $\semi_R(T)$ to $\Aut_\Ring(R)$. It is straightforward to check that $\semi_R(-)$ preserves identities, composition, and associativity, and thus defines a functor. 
\end{proof}

\begin{definition}\label{def:twisted-group-ring-functor} Let $R$ be a ring, and let $\theta : H \to \Aut_\Ring(R)$ be an element of the slice category $\Grp/\Aut_\Ring(R)$. The \textit{twisted group ring} $R_\theta[H]$ has the same elements as the group ring $R[H]$, but the multiplication is twisted by $\theta$ in the following way:
\begin{align*}
    (r_1 h_1) \cdot (r_2 h_2) := r_1 \theta_{h_1}(r_2) h_1 h_2,
\end{align*}
where we understand that this definition extends additively. The ring $R_\theta[H]$ comes equipped with a natural ring homomorphism $R \to R_\theta[H]$ sending $r$ to $r1_H$.
\end{definition}

\begin{proposition}\label{prop:twistification-is-a-functor} The assignment of a twisted group ring to a group action assembles into a functor
\begin{align*}
    \twist_R : \Grp/\Aut_\Ring(R) \to R/\Ring,
\end{align*}
which we refer to as \textit{twistification}.
\end{proposition}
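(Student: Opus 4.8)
The plan is to define $\twist_R$ on objects by sending a group action $\theta \colon H \to \Aut_\Ring(R)$ to the natural ring homomorphism $R \to R_\theta[H]$ of \autoref{def:twisted-group-ring-functor}, and to define it on morphisms by applying $\alpha$ to the group elements. In detail, a morphism in $\Grp/\Aut_\Ring(R)$ from $\theta\colon H \to \Aut_\Ring(R)$ to $\omega\colon K \to \Aut_\Ring(R)$ is a group homomorphism $\alpha\colon H \to K$ satisfying $\omega\circ\alpha = \theta$, and I would set $\twist_R(\alpha)\colon R_\theta[H] \to R_\omega[K]$ to be the additive extension of the assignment $rh \mapsto r\,\alpha(h)$.

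First I would observe that this map is additive and $R$-linear by construction, so it is automatically compatible with the structure maps $R \to R_\theta[H]$ and $R \to R_\omega[K]$: both send $r$ to $r1$, and $\alpha$ preserves the identity, so $\twist_R(\alpha)(r1_H) = r\,\alpha(1_H) = r1_K$. This is exactly the condition for $\twist_R(\alpha)$ to be a morphism in the slice category $R/\Ring$. The substantive step, which I expect to be the crux, is verifying that $\twist_R(\alpha)$ respects multiplication. Expanding a product as $(r_1 h_1)(r_2 h_2) = r_1 \theta_{h_1}(r_2) h_1 h_2$ and then applying $\twist_R(\alpha)$ gives $r_1 \theta_{h_1}(r_2)\,\alpha(h_1)\alpha(h_2)$, whereas multiplying the images separately inside $R_\omega[K]$ gives $r_1 \omega_{\alpha(h_1)}(r_2)\,\alpha(h_1)\alpha(h_2)$. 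These two expressions agree precisely because the slice condition $\omega\circ\alpha = \theta$ forces the equality of automorphisms $\omega_{\alpha(h_1)} = \theta_{h_1}$. This is the single point at which the hypothesis that $\alpha$ lies over $\Aut_\Ring(R)$ is genuinely used, and it is no accident: it is exactly the twist by $\theta$ that would obstruct multiplicativity for a group homomorphism not respecting the actions.

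Finally I would confirm functoriality. The identity morphism on $\theta\colon H \to \Aut_\Ring(R)$ is sent to the additive extension of $rh \mapsto rh$, which is $\id_{R_\theta[H]}$; and for composable morphisms $\alpha\colon H \to K$ and $\beta\colon K \to L$, both $\twist_R(\beta\alpha)$ and $\twist_R(\beta)\circ\twist_R(\alpha)$ send $rh$ to $r\,\beta(\alpha(h))$, so they coincide. These verifications are routine once the multiplicativity computation above is in place, so the only real work is the compatibility of the twists recorded there.
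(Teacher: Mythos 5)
Your proposal is correct and matches the paper's proof essentially verbatim: both define $\twist_R$ on morphisms by $rh \mapsto r\,\alpha(h)$ and reduce multiplicativity to the slice-category identity $\omega_{\alpha(h_1)} = \theta_{h_1}$. You are somewhat more explicit than the paper about the under-$R$ compatibility and the identity/composition checks (which the paper dismisses as immediate), but this is the same argument.
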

\begin{proof} To define $\twist_R$ on morphisms, suppose we have a morphism in $\Grp/\Aut_\Ring(R)$:
\[ \begin{tikzcd}
    G\ar[rr,"f" above]\ar[dr,"\theta" below left] &  & K\ar[dl,"\psi" below right]\\
     & \Aut_\Ring(R), &
\end{tikzcd} \]
meaning that $\theta_g(r) = \psi_{f(g)}(r)$ for any $r\in R$. Then there is a function $F: R_\theta[G] \to R_\psi[H]$ sending $rg \mapsto rf(g)$. We verify that $F$ is a ring homomorphism by observing that
\begin{align*}
        F((r_1 g_1)\cdot (r_2 g_2)) &= F(r_1 \theta_{g_1}(r_2) g_1 g_2) = r_1 \psi_{f(g_1)}(r_2) f(g_1) f(g_2) \\
        &= \left( r_1 f(g_1) \right)\cdot \left( r_2 f(g_2) \right) = F(r_1 g_1)\cdot F(r_2 g_2).
\end{align*}
It is immediate to check this assignment is functorial.
\end{proof}

\begin{theorem}\label{thm:twistification-semilinearization-adjunction} Twistification and semilinearization define an adjunction
\begin{align*}
    \twist_R : \Grp/\Aut_\Ring(R) \leftrightarrows R/\Ring : \semi_R.
\end{align*}

\end{theorem}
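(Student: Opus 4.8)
The plan is to exhibit, for each group action $\theta\colon G\to\Aut_\Ring(R)$ and each object $f\colon R\to S$ of $R/\Ring$, a bijection
\[
\Hom_{R/\Ring}\bigl(\twist_R(\theta),\,f\bigr)\;\cong\;\Hom_{\Grp/\Aut_\Ring(R)}\bigl(\theta,\,\semi_R(f)\bigr),
\]
natural in both variables, with $\twist_R$ as the left adjoint. First I would unwind the left-hand side: a morphism there is a ring homomorphism $\Phi\colon R_\theta[G]\to S$ whose restriction along $R\to R_\theta[G]$ equals $f$. Since every element of $R_\theta[G]$ is a finite sum $\sum_g r_g\,g$, such a $\Phi$ is completely determined by $f$ together with the elements $s_g:=\Phi(1_R\,g)\in S$.

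The crux is to identify exactly which families $(s_g)_{g\in G}$ arise. Unit preservation forces $s_e=1_S$, and applying $\Phi$ to the identity $(1_R g)(1_R h)=1_R\,gh$ gives $s_{gh}=s_g s_h$, so $g\mapsto s_g$ is a group homomorphism into $S^\times$ (invertibility of $s_g$ follows because $1_R g$ is a unit in $R_\theta[G]$ with inverse $1_R g^{-1}$). The essential computation comes from the defining twist relation
\[
(1_R\,g)\cdot(r\,1_G)=\theta_g(r)\,g \quad\text{in } R_\theta[G];
\]
applying $\Phi$ yields $s_g\,f(r)=f(\theta_g(r))\,s_g$ for all $r\in R$, which is precisely the condition that $(s_g,\theta_g)\in\semi_R(S)$. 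Thus $g\mapsto(s_g,\theta_g)$ is a group homomorphism $G\to\semi_R(S)$ lying over $\theta$, i.e.\ a morphism $\theta\to\semi_R(f)$ in $\Grp/\Aut_\Ring(R)$. Conversely, given such a morphism $g\mapsto(s_g,\phi_g)$ --- where lying over $\theta$ forces $\phi_g=\theta_g$ --- I would set $\Phi\bigl(\sum_g r_g\,g\bigr):=\sum_g f(r_g)\,s_g$ and check, using $s_{gh}=s_g s_h$ and the semilinearity relation, that $\Phi$ is a ring homomorphism extending $f$; the two assignments are visibly mutually inverse.

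It then remains to check naturality. In the $S$-variable, a morphism $h$ in $R/\Ring$ with source $f$ acts on the left by postcomposition $\Phi\mapsto h\circ\Phi$ and on the right by $\semi_R(h)$, and since both send $s_g$ to $h(s_g)$ the naturality square commutes; in the $G$-variable, precomposition by a morphism of group actions matches on both sides via the functoriality established in \autoref{prop:twistification-is-a-functor} and \autoref{prop:semilinearization-is-functor}. I expect the bijection itself to be the conceptual heart --- in particular the observation that the twist relation $g r=\theta_g(r)g$ is exactly the source of semilinearity --- while the main obstacle is the bookkeeping in the ring-homomorphism verification, namely confirming that multiplicativity of $\Phi$ on a general product $\bigl(\sum_g a_g g\bigr)\bigl(\sum_h b_h h\bigr)=\sum_{g,h} a_g\theta_g(b_h)\,gh$ reduces cleanly to $s_{gh}=s_g s_h$ together with the semilinear intertwining $s_g f(b)=f(\theta_g(b))s_g$. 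As an alternative packaging, one could instead write down the unit $\eta_\theta\colon g\mapsto(1_R\,g,\theta_g)$ and counit $\epsilon_f\colon r\,(s,\phi)\mapsto f(r)\,s$ and verify the triangle identities, but the hom-set bijection is the most transparent route here.
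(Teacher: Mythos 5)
Your proposal is correct and takes essentially the same route as the paper: both establish the hom-set bijection $\Phi \mapsto \left(g \mapsto \left(\Phi(1_R\, g), \theta_g\right)\right)$, derive the semilinearity condition $s_g f(r) = f(\theta_g(r)) s_g$ from the twist relation, construct the inverse by $rg \mapsto f(r)s_g$, and reduce the naturality check to elements of the form $1_R\, g$. The only cosmetic differences are that you present the bijection as a pair of mutually inverse assignments rather than via injectivity and surjectivity, and you verify naturality in each variable separately where the paper handles both in a single combined square.
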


The final section of this note is dedicated to the proof of \autoref{thm:twistification-semilinearization-adjunction}. We will include one corollary of the natural bijection associated to this adjunction.

\begin{corollary}\label{cor:mod-over-tw-grp-rg} \textit{(Modules over a twisted group ring are $R$-modules with semilinear $G$-action)} Let $R$ be a $G$-ring. Then for any $R$-module $\End_\Ab(M) \in R/\Ring$, there is a natural isomorphism
\begin{align*}
    \Hom_{R/\Ring}\left(\twR{G}, \End_\Ab(M) \right) \cong \Hom_{\Grp/\Aut_\Ring(R)} \left(G, \Aut_{\Mod^\semi(R)}(M) \right).
\end{align*}
The left side describes extensions of the $R$-module structure on $M$ to an $\twR{G}$-module structure, while the right side describes the possible semilinear $G$-actions on $M$ compatible with the $G$-action on $R$.
\end{corollary}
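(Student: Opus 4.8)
The plan is to read off \autoref{cor:mod-over-tw-grp-rg} as a direct instance of the adjunction established in \autoref{thm:twistification-semilinearization-adjunction}. That adjunction supplies, for every object $(\theta\colon G\to\Aut_\Ring(R))$ of $\Grp/\Aut_\Ring(R)$ and every object $(f\colon R\to S)$ of $R/\Ring$, a bijection
$$\Hom_{R/\Ring}(\twist_R(\theta),f)\;\cong\;\Hom_{\Grp/\Aut_\Ring(R)}(\theta,\semi_R(f))$$
natural in both arguments. First I would specialize this to the given $G$-ring structure $\theta\colon G\to\Aut_\Ring(R)$ and to the object $\chi\colon R\to\End_\Ab(M)$ of $R/\Ring$ recording the $R$-module structure on $M$.

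It then remains to identify the two sides with those appearing in the statement. On the left, \autoref{def:twisted-group-ring-functor} gives $\twist_R(\theta)=\twR{G}$ as an object of $R/\Ring$, so the left-hand hom-set is literally $\Hom_{R/\Ring}(\twR{G},\End_\Ab(M))$. On the right, the displayed computation opening \autoref{sec:semilinearization} already records the equality $\semi_R(\End_\Ab(M))=\Aut_{\Mod^\semi(R)}(M)$; the point I would stress is that this is an identification of objects \emph{over} $\Aut_\Ring(R)$, i.e. the forgetful homomorphism of \autoref{def:group-semilinear-automorphisms} coincides with the map $U\colon\Aut_{\Mod^\semi(R)}(M)\to\Aut_\Ring(R)$ from the preceding remark. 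With these two matchings the adjunction bijection becomes precisely the asserted isomorphism, and its naturality is inherited from naturality of the adjunction.

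Finally I would unpack each side to justify the two interpretive sentences. A morphism $\twR{G}\to\End_\Ab(M)$ in $R/\Ring$ is a ring homomorphism, hence an $\twR{G}$-module structure on $M$, whose restriction along $R\to\twR{G}$ is forced to be the given $\chi$; this is exactly an extension of the $R$-module structure to an $\twR{G}$-module structure. A morphism $G\to\Aut_{\Mod^\semi(R)}(M)$ in $\Grp/\Aut_\Ring(R)$ is a group homomorphism lying over $\theta$, which is precisely a semilinear $G$-action on $M$ compatible with the $G$-action on $R$ in the sense defined earlier. Since the Corollary follows formally from the Theorem, there is no genuine obstacle; the only step requiring care is verifying that the equality $\semi_R(\End_\Ab(M))=\Aut_{\Mod^\semi(R)}(M)$ respects the structure maps to $\Aut_\Ring(R)$, so that the two hom-sets taken in the slice category $\Grp/\Aut_\Ring(R)$ really do agree.
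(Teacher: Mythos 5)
Your proposal is correct and matches the paper's own (implicit) argument: the paper presents this corollary as an immediate specialization of \autoref{thm:twistification-semilinearization-adjunction} to the object $\chi\colon R\to\End_\Ab(M)$, using exactly the identification $\semi_R(\End_\Ab(M))=\Aut_{\Mod^\semi(R)}(M)$ displayed at the opening of \autoref{sec:semilinearization}. Your added care in checking that this identification is compatible with the forgetful maps to $\Aut_\Ring(R)$ is a point the paper leaves tacit, but it is the same proof.
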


\section{Proof of~\autoref{thm:twistification-semilinearization-adjunction}}
In order to verify the natural bijection for the adjunction in \autoref{thm:twistification-semilinearization-adjunction}, we will treat the bijection and naturality separately for ease of reading.

\begin{proposition} For any ring homomorphism $\chi: R \to S$ and group homomorphism $\theta: G \to\Aut_\Ring(R)$ there is a bijection
\begin{align*}
    \Pi: \Hom_{R/\Ring}(\twR{G}, S) &\xto{\sim} \Hom_{\Grp/\Aut_\Ring(R)}(G, \semi_R(S)),
\end{align*}
given by sending a ring homomorphism $f: R_\theta[G] \to S$ to the group homomorphism $g \mapsto \left( f(1_Rg), \theta_g \right)$.
\end{proposition}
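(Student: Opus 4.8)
The plan is to prove bijectivity by exhibiting an explicit two-sided inverse $\Psi$ to $\Pi$, rather than arguing injectivity and surjectivity separately; the inverse carries the same information more transparently and makes clear where each hypothesis is used. So I would first confirm that $\Pi$ genuinely lands in the claimed hom-set, then define $\Psi$ and check it is well-defined, and finally verify $\Pi\Psi = \id$ and $\Psi\Pi = \id$.

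For well-definedness of $\Pi$, fix a morphism $f \colon \twR{G} \to S$ in $R/\Ring$, so that $f(r 1_G) = \chi(r)$ for all $r \in R$. I would check three points for the assignment $g \mapsto (f(1_R g),\theta_g)$: that each $f(1_R g)$ is a unit of $S$, which follows since $1_R g$ is a unit of $\twR{G}$ with inverse $1_R g^{-1}$ and $f$ preserves units; that the pair satisfies the defining relation of $\semi_R(S)$; and that the assignment is a group homomorphism over $\Aut_\Ring(R)$. The second point is the heart of the matter: using $\chi(r) = f(r 1_G)$ and the twisted multiplication, both $f(1_R g)\chi(r)$ and $\chi(\theta_g(r))f(1_R g)$ evaluate to $f(\theta_g(r)\,g)$, because $(1_R g)(r 1_G) = (\theta_g(r) 1_G)(1_R g) = \theta_g(r)\,g$ inside $\twR{G}$. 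For the third point, $f((1_R g)(1_R h)) = f(1_R gh)$ gives multiplicativity of the first coordinate, $\theta$ being a homomorphism handles the second, and the composite to $\Aut_\Ring(R)$ is $\theta$ by construction.

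Next I would define $\Psi$: a morphism $\rho \colon G \to \semi_R(S)$ over $\Aut_\Ring(R)$ must have the form $\rho(g) = (s_g,\theta_g)$ by the commuting triangle, and I send it to the additive extension of $r g \mapsto \chi(r) s_g$. Checking this is a ring homomorphism under $R$ reuses the same relation: on a product $(r_1 g_1)(r_2 g_2) = r_1 \theta_{g_1}(r_2) g_1 g_2$, one slides $s_{g_1}$ past $\chi(r_2)$ via $(s_{g_1},\theta_{g_1}) \in \semi_R(S)$ and contracts $s_{g_1} s_{g_2} = s_{g_1 g_2}$ since $\rho$ is a homomorphism, arriving at $\chi(r_1)\chi(\theta_{g_1}(r_2)) s_{g_1 g_2}$, which is the image of the left-hand side; compatibility under $R$ is immediate from $s_{1_G} = 1_S$. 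The two composites then reduce to the identities $(r 1_G)(1_R g) = r g$ and $\chi(1_R) = 1_S$ together with additivity.

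The main obstacle is conceptual rather than computational: recognizing that the single semilinearity relation $s\,\chi(r) = \chi(\phi(r))\,s$ cutting out $\semi_R(S)$ is exactly the shadow, under $f$, of the twist in the multiplication of $\twR{G}$, so that well-definedness in \emph{both} directions is governed by one and the same commutation identity. Once this relation is isolated as the pivot, every remaining step is a routine verification of ring- and group-homomorphism axioms.
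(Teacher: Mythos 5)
Your proposal is correct and follows essentially the same route as the paper: the well-definedness computation pivoting on $f(1_R g)\chi(r) = \chi(\theta_g(r))f(1_R g)$ and the formula $rg \mapsto \chi(r)s_g$ for the inverse are exactly the paper's, which merely packages your map $\Psi$ as its surjectivity argument and your identity $\Psi\Pi = \id$ as its injectivity argument. One small point in your favor: you verify that $f(1_R g)$ is a unit of $S$ (via $(1_R g)\cdot(1_R g^{-1}) = 1_R e_G$), a condition required for membership in $\semi_R(S) \subseteq S^\times \times \Aut_\Ring(R)$ that the paper's proof passes over silently.
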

\begin{proof} We first check that $\left( f(1_R g), \theta_g \right) \in \semi_R(S)$ for any $g\in G$. That is, we must see that $f(1_R g)\chi(r)$ is equal to $\chi(\theta_g(r))f(1_R g)$ for any $r\in R$. Recall that $f$ is a homomorphism under $R$, meaning that $f(r e_G) = \chi(r)$ for any $r\in R$. Thus
\begin{align*}
    f(1_R g) \chi(r) &= f(1_R g) f(r e_G) = f \left( (1_R g)\cdot (re_G) \right) = f(1_R \theta_g(r) g e_G) \\
    &= f \left( \theta_g(r) g \right) = f \left( (\theta_g(r) e_G)\cdot(1_R g) \right) \\
    &= \chi(\theta_g(r)) f(1_R g).
\end{align*}

We observe that the function $g \mapsto \left( f(1_Rg), \theta_g \right)$ is a group homomorphism since $\theta_{gg'} = \theta_g \theta_{g'}$, and $f(1_R g g') = f(1_R g)f(1_R g')$. Finally, we remark that $\Pi(f)$ is a group homomorphism over $\Aut_\Ring(R)$, so $\Pi$ is well-defined.

Next we check that $\Pi$ is injective. Suppose $f$ and $h$ are two ring homomorphisms $\twR{G} \to S$ under $R$ with the property that $\Pi(f) = \Pi(h)$. By definition, we see that $f(re_G) = \chi(r) = h(re_G)$ for each $r\in R$. For any $r\in R$ and $g\in G$, we have that $(re_G) \cdot (1_R g) = r\theta_{e_G}(1_R) e_G g = rg$. Thus $f(rg) = f(re_G)f(1_R g)$, so it suffices for us to check that $f(1_R g) = h(1_R g)$ for each $g\in G$. This is clearly true when $\Pi(f) = \Pi(h)$. So $f$ and $h$ agree, and hence $\Pi$ is injective.

Finally we verify that $\Pi$ is surjective. As $\semi_R(S)$ is by definition a subset of the product $S^\times \times \Aut_\Ring(R)$, it comes equipped with natural projection maps
\begin{align*}
    \begin{tikzcd}[ampersand replacement=\&]
    S^\times \& \semi_R(S)\ar[l,"\pi" above]\ar[r,"\pi_s" above] \& \Aut_\Ring(R),\end{tikzcd}
\end{align*}
which are group homomorphisms as $\semi_R(S)$ is endowed with diagonal multiplication. Suppose that $\alpha : G \to \semi_R (S)$ is any group homomorphism over $\Aut_\Ring(R)$. Then $\pi_s(\alpha(g)) = \theta_g$, so $\alpha$ is really determined by the data of $\pi \alpha: G \to S^\times$. We observe that
\begin{equation}\label{eqn:alpha-in-semi}
\begin{aligned}
    (\pi\alpha(g))\chi(r) = \chi(\theta_g(r)) (\pi\alpha(g)) \quad\quad \text{for any }r\in R,
\end{aligned}
\end{equation}
since $\alpha(g) = \left( \pi\alpha(g), \theta_g \right)$ is an element of $\semi_R(S)$.

Our goal is to concoct a ring homomorphism $f: \twR{G} \to S$ in $R/\Ring$ with the property that $\Pi(f) = \alpha$. Such an $f$ must satisfy
\begin{enumerate}
    \item $f(re_G) = \chi(r)$ since $f$ is a morphism under $R$
    \item $f(1_R g) = \pi\alpha(g)$ since $\Pi(f) = \alpha$.
\end{enumerate}
As we notice that $f(rg) = f((re_G)\cdot(1_R g))$ for any $r$ and $g$, it makes sense to define $f$ to be the function
\begin{align*}
    f: \twR{G} &\to S \\
    rg &\mapsto \chi(r) \pi\alpha(g).
\end{align*}
If we can verify that $f$ is a ring homomorphism under $R$, then $\Pi(f) = \alpha$, and we will have verified that $\Pi$ is surjective. Clearly $f$ preserves multiplicative and additive identities, and preserves addition by definition, so it suffices to check it is multiplicative. Let $r_1, r_2 \in R$, and $g_1, g_2 \in G$. Then
\begin{align*}
    f \left( (r_1 g_1)(r_2 g_2) \right) &= f \left( r_1 \theta_{g_1}(r_2) g_1 g_2 \right) = \chi(r_1 \theta_{g_1}(r_2)) \pi\alpha(g_1 g_2) \\
    &= \chi(r_1) \chi(\theta_{g_1}(r_2)) \pi\alpha(g_1) \pi\alpha(g_2).
\end{align*}
By \autoref{eqn:alpha-in-semi}, we see that the above is equal to
\begin{align*}
    &\chi(r_1) \left(\chi(\theta_{g_1}(r_2)) \pi\alpha(g_1)\right) \pi\alpha(g_2) = \left(\chi(r_1) \pi\alpha(g_1)\right) \left( \chi(r_2) \pi\alpha(g_2) \right) = f(r_1 g_1) f(r_2 g_2).\qedhere
\end{align*}
\end{proof}

\begin{proposition}\label{prop:natural-bijection} The function $\Pi$ is a natural bijection.
\end{proposition}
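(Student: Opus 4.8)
The plan is to establish naturality of $\Pi$ in each of its two variables separately, since a family of hom-set bijections underlies an adjunction precisely when it is natural in the group-over-automorphisms variable and in the ring-under-$R$ variable simultaneously. Throughout I would exploit the explicit descriptions already in hand: $\Pi(f)$ sends $g \mapsto (f(1_R g), \theta_g)$; the functor $\semi_R$ carries a morphism $h: S \to T$ to the map $(s,\phi) \mapsto (h(s),\phi)$ acting coordinatewise (\autoref{prop:semilinearization-is-functor}); and the functor $\twist_R$ carries a morphism of $\Grp/\Aut_\Ring(R)$ to the ring homomorphism which applies the underlying group homomorphism to the group coordinate (\autoref{prop:twistification-is-a-functor}).

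For naturality in the ring variable, I would fix $\theta: G \to \Aut_\Ring(R)$ together with a morphism $h: S \to T$ in $R/\Ring$, and chase an arbitrary $f: \twR{G} \to S$ around the square built from post-composition with $h$ on the one side and with $\semi_R(h)$ on the other. Applying $\semi_R(h)$ to $\Pi(f)$ yields $g \mapsto (h(f(1_R g)), \theta_g)$, while $\Pi(h \circ f)$ sends $g \mapsto ((h\circ f)(1_R g), \theta_g)$. These agree on the nose, so this square commutes with essentially no computation.

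For naturality in the group variable, I would fix $\chi: R \to S$ and a morphism $\varphi$ in $\Grp/\Aut_\Ring(R)$ from $\theta': G' \to \Aut_\Ring(R)$ to $\theta: G \to \Aut_\Ring(R)$, so that $\theta_{\varphi(g')} = \theta'_{g'}$ for every $g' \in G'$. Chasing $f: \twR{G} \to S$ around the corresponding square, precomposition with $\twist_R(\varphi)$ followed by $\Pi$ yields $g' \mapsto (f(1_R \varphi(g')), \theta'_{g'})$, whereas applying $\Pi$ first and then restricting along $\varphi$ yields $g' \mapsto (f(1_R\varphi(g')), \theta_{\varphi(g')})$. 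The two first coordinates are visibly equal, and the two second coordinates agree exactly because $\varphi$ is a morphism over $\Aut_\Ring(R)$.

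The computations themselves are short; the only point genuinely requiring care, and hence the main obstacle, is the bookkeeping in the group-variable square, where one must track the two distinct twists $\theta$ and $\theta'$ and the contravariance of the hom functor, so that equality of the second coordinates is correctly attributed to the condition $\theta_{\varphi(g')} = \theta'_{g'}$ rather than silently assumed. Once both squares are seen to commute, $\Pi$ is natural in both variables, which completes the verification that $\twist_R \dashv \semi_R$.
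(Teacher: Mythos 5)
Your proof is correct, and it takes a genuinely different (more standard) route than the paper. You decompose naturality into the two one-variable conditions and verify each on the nose: in the ring variable, $\Pi(h\circ f)$ and $\semi_R(h)\circ \Pi(f)$ both send $g \mapsto \left(h(f(1_R g)),\theta_g\right)$; in the group variable, $\Pi(f\circ \twist_R(\varphi))$ and $\Pi(f)\circ\varphi$ both send $g' \mapsto \left(f(1_R\varphi(g')),\theta'_{g'}\right)$, the second coordinates agreeing precisely because $\varphi$ is a morphism over $\Aut_\Ring(R)$ --- a point you correctly flag rather than elide. The paper instead fixes a group morphism $j$ over $\Aut_\Ring(R)$ and a ring morphism $h$ under $R$ simultaneously and proves a biconditional: the square $h\mu = \lambda\circ\twist_R(j)$ in $R/\Ring$ commutes if and only if the square $\semi_R(h)\circ\Pi(\mu) = \Pi(\lambda)\circ j$ in $\Grp/\Aut_\Ring(R)$ commutes, by reducing both sides to the single condition that the relevant composites agree on elements of the form $1_R g$ (agreement on elements $r e_G$ being automatic from the slice-category structure). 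The two formulations are equivalent: given bijectivity of $\Pi$, your two equations imply the paper's biconditional by applying $\Pi$ to both sides of the left square, and conversely the biconditional specializes (taking $j$ or $h$ to be an identity) to your equations. What each buys: your version is the textbook definition of an adjunction-inducing bijection, and each square commutes by an essentially computation-free chase; the paper's version packages naturality as a statement about transporting commutative squares across $\Pi$, which requires the reduction-to-generators argument but exhibits more explicitly how commutativity in one slice category mirrors commutativity in the other.
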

\begin{proof} Let $j : G \to K$ be a group homomorphism over $\Aut(R)$ and let $h : S \to T$ be a ring homomorphism under $R$. For naturality, it suffices to verify that the left square commutes if and only if the right square commutes:
\begin{equation}\label{eqn:two-squares}
\begin{aligned}
    \begin{tikzcd}[baseline=0cm]
     {R_\theta[G]}\dar["\twist_R(j)" left]\rar["\mu"] & S\dar["h" right]\\
    {R_\psi[K]}\rar["\lambda" below] & T
\end{tikzcd} \ \Longleftrightarrow\  \begin{tikzcd}[baseline=0cm]
    G\dar["j" left]\rar["\Pi(\mu)"] & \semi_R(S)\dar["\semi_R(h)"]\\
    K\rar["\Pi(\lambda)" below] & \semi_R(T).
\end{tikzcd}
\end{aligned}
\end{equation}
As we are asking for commutativity in a slice category, we know that the left hand square already commutes on certain objects of $R_\theta[G]$, namely those arriving the structure map from $R$, which are elements of the form $re_G$. Thus
\begin{align*}
    h(\mu(re_G)) = \lambda(\twist_R(j)(r e_G)) = \lambda(r j(e_G)) = \lambda(re_K),
\end{align*}
for all $r \in R$. Since this is a diagram of ring homomorphisms, the question of commutativity reduces to a question of commutativity on the elements in $\twR{G}$ of the form $1_R g$ for $g\in G$.

A directly analogous statement is true about the right square in \autoref{eqn:two-squares}. As this is a diagram over $\Aut_\Ring(R)$, we do not need to worry about compatibility with projection maps $\pi_s: \semi_R(S) \to \Aut_\Ring(R)$. Thus we can restrict our attention to $\pi \semi_R(S) \subseteq S^\times$. Phrased differently, it suffices to verify that the following diagram commutes
\[ \begin{tikzcd}
    G\rar["\pi\Pi(\mu)"]\dar["j" left] & S^\times \dar["\left. h\right|_{S^\times}" right]\\
    K\rar["\pi\Pi(\lambda)" below] & T^\times.
\end{tikzcd} \]
As $\Pi(\mu)(g) = \left( \mu(1_Rg), \theta_g \right)$, we see that $\pi\Pi(\mu) = \mu(1_R -)$, and similarly $\pi \Pi(\lambda) = \lambda(1_R -)$. Therefore the above diagram can be rewritten as
\[ \begin{tikzcd}
    G\rar["\mu(1_R-)" above]\dar["j" left] & S^\times \dar["h" right]\\
    K\rar["\lambda(1_R -)" below] & T^\times.
\end{tikzcd} \]
The commutativity of this diagram is equivalent to the condition that $h\mu$ and $\lambda j$ agree on elements of the form $1_R g$, which as we have already seen is equivalent to the commutativity of the left square in \autoref{eqn:two-squares}.\end{proof}

\section*{Acknowledgements}
The author would like to thank Mona Merling for guidance throughout this note, and Maxine Calle for helpful edits. The author is supported by an NSF Graduate Research Fellowship (DGE-1845298).

\bibliographystyle{alpha}
\bibliography{semilin.bib}{}

\begin{thebibliography}{McC75}

\bibitem[Bra21]{Brazelton}
Thomas Brazelton.
\newblock Homotopy groups of equivariant algebraic {K}-theory.
\newblock {\em In progress}, 2021.

\bibitem[Che78]{Chen-thesis}
Nan-Hung Chen.
\newblock {\em Global dimension of skew group rings}.
\newblock ProQuest LLC, Ann Arbor, MI, 1978.
\newblock Thesis (Ph.D.)--Rutgers The State University of New Jersey - New
  Brunswick.

\bibitem[FF09]{fechete}
I.~Fechete and D.~Fechete.
\newblock Some categorial aspects of the skew group rings.
\newblock {\em An. Univ. Oradea Fasc. Mat.}, 16:197--207, 2009.

\bibitem[FM78]{FishMont}
Joe~W. Fisher and Susan Montgomery.
\newblock Semiprime skew group rings.
\newblock {\em J. Algebra}, 52(1):241--247, 1978.

\bibitem[HLS78]{HLS}
David Handelman, John Lawrence, and William Schelter.
\newblock Skew group rings.
\newblock {\em Houston J. Math.}, 4(2):175--198, 1978.

\bibitem[Lam66]{Lambek}
Joachim Lambek.
\newblock {\em Lectures on rings and modules}.
\newblock With an appendix by Ian G. Connell. Blaisdell Publishing Co. Ginn and
  Co., Waltham, Mass.-Toronto, Ont.-London, 1966.

\bibitem[McC75]{Mcconnell}
J.~C. McConnell.
\newblock Representations of solvable {L}ie algebras. {II}. {T}wisted group
  rings.
\newblock {\em Ann. Sci. \'{E}cole Norm. Sup. (4)}, 8(2):157--178, 1975.

\bibitem[Mer17]{Mona}
Mona Merling.
\newblock Equivariant algebraic {K}-theory of {$G$}-rings.
\newblock {\em Math. Z.}, 285(3-4):1205--1248, 2017.

\bibitem[Ost79]{Ost}
James Osterburg.
\newblock The coefficient ring of the skew group ring.
\newblock {\em Czechoslovak Math. J.}, 29(104)(1):144--147, 1979.

\bibitem[Par78]{Park-thesis}
Jae~Keol Park.
\newblock {\em Artinian skew group rings and semiprime twisted group rings}.
\newblock ProQuest LLC, Ann Arbor, MI, 1978.
\newblock Thesis (Ph.D.)--University of Cincinnati.

\bibitem[Poo16]{poon}
Edward Poon.
\newblock {\em SKEW GROUP RINGS}, 2016.
\newblock \url{https://alistairsavage.ca/pubs/Poon-Skew_group_rings.pdf}.

\bibitem[RS17]{RossoSavage}
Daniele Rosso and Alistair Savage.
\newblock A general approach to {H}eisenberg categorification via wreath
  product algebras.
\newblock {\em Math. Z.}, 286(1-2):603--655, 2017.

\end{thebibliography}

\end{document}